\newtheorem{theorem}{Theorem}[section]
\newtheorem{definition}[theorem]{Definition}
\newtheorem{lemma}[theorem]{Lemma}
\newcommand{\dom}{\mathop{\rm dom}\nolimits}
\begin{document}

\begin{frontmatter}

\title{Stability of Infinite-dimensional Sampled-data  Systems
	with Unbounded Control Operators and Perturbations\tnoteref{title}}
\tnotetext[title]{
This work was supported in part by JSPS KAKENHI Grant Number
JP20K14362.}
\author[add1]{Masashi~Wakaiki\corref{cor1}}
\ead{wakaiki@ruby.kobe-u.ac.jp}
\cortext[cor1]{Corresponding author. Tel.:~+81 78 803 6232; fax: +81 78 803 6392}
\address[add1]{Department of Applied Mathematics, Graduate School of System Informatics, Kobe University, Kobe
657-8501, Japan}

\begin{abstract}
	We  analyze the robustness of the exponential stability of
	infinite-dimensional sampled-data systems with unbounded control
	operators.
	The unbounded perturbations we consider are the so-called
	Desch-Schappacher perturbations, which arise, e.g., from
	the boundary perturbations of systems described by
	partial differential equations.
	As the main result,
	we show that the exponential stability of the sampled-data system
	is preserved under all Desch-Schappacher perturbations
	sufficiently small in a certain sense.
\end{abstract}

\begin{keyword}
	Desch-Schappacher perturbations,
	infinite-dimensional systems,
	sampled-data systems,
	stability
\end{keyword}
\end{frontmatter}

\section{Introduction}
Consider the following sampled-data system with state space $X$
and input space $U$ (both Banach spaces):
\begin{equation}
\label{eq:plant}
\left\{
\begin{aligned}
\dot x(t) &= (A+D)x(t)+Bu(t), \quad t\geq 0;\qquad 
x(0)= x^0 \in X \\
u(t) &= Fx(k\tau),\quad  k\tau\leq t < (k+1)\tau,~ k=0,1,2,\dots,
\end{aligned}
\right.
\end{equation}
where $\tau>0$ is the sampling period, 
$A$ is the generator of 
a strongly continuous semigroup $(T(t))_{t\geq 0}$ on $X$, and
the feedback operator $F$ is a bounded linear operator from $X$
to $U$.
The control operator $B$
is linear but unbounded in the sense that 
it maps $U$  into  a larger space than $X$.
More precisely, we employ the extrapolation space $X_{-1}$,
which is the completion of $X$ with respect to an appropriate norm.
The perturbation $D$ is also linear but unbounded, which maps 
$X$ into $X_{-1}$. In particular, we
are interested in the so-called Desch-Schappacher perturbations.
See Section~\ref{sec:preliminaries} for the details of these notions.
If $B$ and $D$ map boundedly into $X$, then they are called bounded; 
otherwise they are called unbounded. 
In this paper,  we study the exponential stability of the perturbed
sampled-data system \eqref{eq:plant}.

A fundamental issue to be considered before discussing the 
stability of the perturbed system is 
the robustness of the generation of strongly
continuous semigroups:
Given the generator $A$ of a strongly continuous semigroup, for which
perturbation $D$ does the (properly-defined) sum $A+D$ again
become a generator?
It is well known that an affirmative answer has been given
to this question for some classes of perturbations such as
relatively $A$-bounded perturbations \cite[Chapter~XIII]{Hille1957}, 
\cite[Section~III.2]{Engel2000}, 
Miyadera-Voigt perturbations \cite{Miyadera1966,Voigt1977}, \cite[Section~III.3.c]{Engel2000}, and 
Desch-Schappacher perturbations \cite{Desch1989}, \cite[Section~III.3.a]{Engel2000}.
In this direction, Weiss \cite{Weiss1994} 
and Staffans \cite[Chapter~7]{Staffans2005} have presented
generation results on $A+D$ for
a class of perturbations $D$ factorized as $D=D_1D_2$ 
with an admissible 
control operator $D_1$ and 
an admissible observation operator $D_2$.
The idea of factorizing perturbations has been also
used in \cite{Haak2006} in order to develop 
robustness results on sectoriality and
maximal $L^p$-regularity under more general perturbations with small norm.
The robustness analysis of exponential stability 
for infinite-dimensional 
continuous-time systems has been performed
under several classes of unbounded perturbations
in \cite{Pritchard1989, Pandolfi1991, Townley1992Survey, PaunonenMasterThesis}.
In the discrete-time setting,
time-varying perturbations have been considered for
the exponential stability of infinite-dimensional  time-varying 
systems in \cite{Wirth1994, Sasu2004}.
Strong stability and polynomial stability are much weaker notions of
stability than exponential stability, and 
perturbations preserving these nonexponential classes 
of  stability
have been investigated, e.g., in \cite{Paunonen2011,
	Paunonen2012SS,Paunonen2014JDE,Paunonen2015}.

For sampled-data systems, the robustness of stability 
with respect to sampling and perturbations have
been studied.
Robustness with respect to sampling means that 
the closed-loop stability is preserved when an idealized sample-and-hold
process is applied to a stabilizing continuous-time controller.
For infinite-dimensional systems,
this type of robustness has been discussed
 in \cite{Rebarber2002, Logemann2003, Rebarber2006,Wakaiki2021SIAM}.
Robustness analysis with respect to perturbations has been also developed 
for
infinite-dimensional sampled-data systems in \cite{Wakaiki2020SCL,Wakaiki2020_self}.
Bounded control operators and unbounded perturbations 
have been considered in
\cite{Wakaiki2020SCL}, while unbounded control operators and bounded nonlinear perturbations
have been treated in \cite[Lemma~4.5]{Wakaiki2020_self}.

In this paper, we continue and expand the robustness analysis developed
in \cite{Wakaiki2020SCL}.
Desch-Schappacher perturbations, in which we are interested here, 
have properties different
 from relatively $A$-bounded perturbations and 
 Miyadera-Voigt perturbations considered in \cite{Wakaiki2020SCL}. In fact,
Desch-Schappacher perturbations map  the state space $X$ into 
the extrapolation space $X_{-1}$, whereas the
perturbations studied in \cite{Wakaiki2020SCL} map 
a subspace of $X$, e.g., the domain of $A$, into $X$. 
The class of Desch-Schappacher perturbations is not a subset of
the classes of perturbations considered in \cite{Wakaiki2020SCL}
and vice versa.
Operators associated with boundary conditions of partial differential equations
are often represented by using  $X_{-1}$.
Therefore, Desch-Schappacher perturbations appear when boundary 
conditions of partial differential equations are subject to perturbations, as
shown in Section~\ref{sec:example}.
Moreover, this class of perturbations is also used for
dynamical population equations and delay differential 
equations; see, for example, 
\cite{Piazzera2004,Maniar2005} and 
\cite[Section~VI.6]{Engel2000}.
Robustness of controllability under Desch-Schappacher perturbations
has been studied in \cite{Boulite2005, Hadd2005JEEQ}.

Our aim is to prove that the
exponential stability of sampled-data systems
with unbounded control operators
is preserved under sufficiently small Desch-Schappacher perturbations.
The technical difficulties of this study come from
the combination of the unbounded 
control operator and perturbation.
In particular, we have to be careful about 
the operator $S_{\!D}(\tau)$ on $U$ defined by
\begin{equation*}
S_{\!D}(\tau)u :=
\int^\tau_0 T_D(s)B u\mathrm{d}s,\quad u \in U,
\end{equation*}
where $(T_D(t))_{t \geq 0}$ is the semigroup generated by the sum 
$A+D$, since
$S_{\!D}(\tau)$ contains both
the control operator $B$ and the perturbation $D$. 
The arguments used in \cite{Wakaiki2020SCL,Wakaiki2020_self}
rely on
the assumption that either $B$ or $D$ is bounded.
To treat the case where 
$B$ and $D$ are both unbounded,
we develop an alternative approach which
employs 
the fact that
the resolvent of $A$ is extended to a bounded operator 
from $X_{-1}$ to $X$ and hence 
absorbs the unboundedness of $B$ and $D$.

This paper is organized as follows.
In Section~\ref{sec:preliminaries}, we present
preliminaries on extrapolation spaces, Desch-Schappacher perturbations, and
the solution of the abstract evolution equation \eqref{eq:plant}.
Section~\ref{sec:main_result} contains our main result and its proof.
Section~\ref{sec:example} is devoted to an application to
the boundary control of a heated rod with a boundary perturbation,
which is extended to a diagonal system in a Banach framework.

\paragraph*{Notation}
Let $\mathbb{Z}_+$ and $\mathbb{R}_+$ denote the set of nonnegative integers
and the set of nonnegative real numbers, respectively.
Let $X$ and $Y$ be Banach spaces. 
The space of
all bounded linear operators from $X$ to $Y$ is denoted by
$\mathcal{L}(X,Y)$. We set $\mathcal{L}(X) := \mathcal{L}(X,X)$.
For a linear operator $A$ defined in $X$ and mapping to $Y$, 
the domain of $A$ is denote by $\dom (A)$.
We denote by $\varrho(A)$
the resolvent set of a linear operator $A:\dom(A) \subset X \to X$.
Let $J$ be an interval of $\mathbb{R}$.
We denote by $C(J,X)$ and $C^1(J,X)$  
the space of all continuous functions $f :J \to X$
and the space of all continuously differential functions $f:J \to X$,
respectively. For $1 \leq p < \infty$, we denote by $L^p(J,X)$ 
the space of all measurable functions $f : J \to X$ such that 
$\int_J \|f(t)\|_X^p \mathrm{d}t < \infty$. We write $L^p(J)$ for
$L^p(J,\mathbb{C})$.

\section{Preliminaries}
\label{sec:preliminaries}
\subsection{Extrapolation spaces}
Let $A$ be the generator of a strongly continuous
semigroup $(T(t))_{t\geq 0}$ on a Banach space $X$.
We denote by $\|\cdot\|$ the norm on $X$
and introduce a new norm
\[
\|x\|_{-1}^A := \|(\lambda I -A)^{-1} x\|,\quad  x \in X
\]
for some $\lambda \in \varrho(A)$.
The completion of $X$ with respect to this norm is called 
the {\em extrapolation space}
associated with $A$ (or $(T(t))_{t \geq 0}$) and is denoted by $X_{-1}^A$.
Different choices of $\lambda$ lead to equivalent norms,
and hence $X_{-1}^A$ is unique up to isomorphism.
For simplicity, we shall denote  the
space $X_{-1}^A$ associated with $A$ 
by $X_{-1}$. 

For $t \geq 0$,
let $T_{-1}(t)$ be the continuous extension of the operator $T(t)$
to the extrapolation $X_{-1}$. Then the operators
$(T_{-1}(t))_{t\geq 0}$ form a strongly continuous semigroup on $X_{-1}$.
The domain of the 
generator $A_{-1}$ of $(T_{-1}(t))_{t\geq 0}$ is given by 
$\dom(A_{-1}) = X$, and
$A_{-1}$ is the unique continuous extension of $A: \dom (A) \to X$ to 
$\mathcal{L}(X,X_{-1})$.
Moreover, $\varrho(A) = \varrho(A_{-1})$ holds.
See Section~II.5 in \cite{Engel2000} and
Section~2.10 in \cite{Tucsnak2009} for more details
on the extrapolation space.

\subsection{Desch-Shappacher perturbations}
Let $X$ be a Banach space and take $t_0 >0$. We denote by
$\mathcal{X}_{t_0}$ the space of
all functions on $[0,t_0]$ into $\mathcal{L}(X)$ 
that are continuous for the strong operator topology.
Then $\mathcal{X}_{t_0}$ is a Banach space with the norm
\[
\|Q\|_{\infty} := \sup_{t \in [0,t_0]} \|Q(t)\|_{\mathcal{L}(X)},\quad 
Q \in \mathcal{X}_{t_0}.
\]
Let $(T(t))_{t\geq 0}$ be a 
strongly continuous
semigroup  on $X$.
For $\Psi \in \mathcal{L}(X,X_{-1})$, we define
the abstract Volterra operator $V_{\Psi}$ on $\mathcal{X}_{t_0}$:
\[
(V_{\Psi}Q)(t) x:=
\int^t_0 T_{-1}(t-s) \Psi Q(s)x\mathrm{d}s,\quad x \in X,~t\in[0,t_0],~Q \in \mathcal{X}_{t_0}.
\]	
This integral converges in $X_{-1}$.
In general, we only have
$(V_{\Psi}Q)(t) \in \mathcal{L}(X,X_{-1})$ for $0\leq t \leq t_0$.  Hence
the Volterra operator $V_{\Psi}$ may not be bounded on $\mathcal{X}_{t_0}$.
The 
class of
{\em Desch-Shappacher perturbations} $\mathcal{S}^{\rm DS}_{t_0}$ is defined by
\[
\mathcal{S}^{\rm DS}_{t_0} := 
\{
\Psi \in \mathcal{L}(X,X_{-1}) : V_{\Psi} \in \mathcal{L} (\mathcal{X}_{t_0}),~
\|V_{\Psi}\|_{\mathcal{L}(\mathcal{X}_{t_0})}  < 1
\}.
\]

We review some  important properties of 
Desch-Shappacher perturbations. The following theorem states that 
the generation of 
strongly continuous semigroups is preserved under 
Desch-Shappacher perturbations.  For the proof,
see Theorem~III.3.1 and Corollary~III.3.2 in \cite{Engel2000}
\begin{theorem}
	\label{thm:generation}
	Let $A$ be the generator of a strongly continuous semigroup $(T(t))_{t\geq0}$
	on a Banach space $X$. If $D \in \mathcal{S}^{\rm DS}_{t_0} $
	for some $t_0 >0$, then the operator $A_D$ defined by
	\begin{equation}
	\label{eq:AD_def}
	A_D x:=(A_{-1}+D)x,\quad x \in
	\dom(A_D) := \{x \in X: (A_{-1}+D)x \in X\}
	\end{equation}
	generates
	a strongly continuous semigroup $(T_D(t))_{t\geq 0}$ on $X$. 
	The semigroup $(T_D(t))_{t\geq 0}$ satisfies the 
	following variation of constants formula:
	\begin{equation}
		\label{eq:VCF}
		T_D(t) x = 
		T(t)x + \int^t_0 T_{-1}(t-s) DT_D(s)x \mathrm{d}s
	\end{equation}
	for all $x \in X$ and all $t\geq 0$.
\end{theorem}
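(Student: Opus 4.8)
The plan is to construct the perturbed semigroup $(T_D(t))_{t \geq 0}$ directly from the variation of constants formula \eqref{eq:VCF}, read as a fixed-point equation in the Banach space $\mathcal{X}_{t_0}$. Since $(T(t))_{t \geq 0}$ is strongly continuous, its restriction to $[0,t_0]$ is an element of $\mathcal{X}_{t_0}$, and \eqref{eq:VCF} takes the compact form $T_D = T + V_D T_D$, that is, $(I - V_D) T_D = T$ in $\mathcal{X}_{t_0}$. By the definition of $\mathcal{S}^{\rm DS}_{t_0}$ we have $V_D \in \mathcal{L}(\mathcal{X}_{t_0})$ with $\|V_D\|_{\mathcal{L}(\mathcal{X}_{t_0})} < 1$, so $I - V_D$ is boundedly invertible on $\mathcal{X}_{t_0}$ through the Neumann series, and I would set
\[
T_D := (I - V_D)^{-1} T = \sum_{n=0}^{\infty} V_D^{\,n} T \in \mathcal{X}_{t_0}.
\]
This produces, on $[0,t_0]$, a strongly continuous family of operators in $\mathcal{L}(X)$ that satisfies \eqref{eq:VCF}. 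The decisive role of the hypothesis $V_D \in \mathcal{L}(\mathcal{X}_{t_0})$ is that the integral in \eqref{eq:VCF}, which a priori returns only operators in $\mathcal{L}(X,X_{-1})$, is guaranteed to map back into $\mathcal{L}(X)$.

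The next step is to promote this family on $[0,t_0]$ to a strongly continuous semigroup on $[0,\infty)$. First I would prove the semigroup law $T_D(t+s) = T_D(t) T_D(s)$ for $s,t \geq 0$ with $s + t \leq t_0$. The natural route is to show that, for fixed $s$, the two maps $t \mapsto T_D(t+s)$ and $t \mapsto T_D(t) T_D(s)$ both solve the same Volterra equation of the form \eqref{eq:VCF} with the same forcing term; a change of variables in the integral together with the semigroup properties of $(T(t))$ and $(T_{-1}(t))$ gives this, and uniqueness of the fixed point forces the two to coincide. Once the law holds on the range $s + t \leq t_0$, I would extend $T_D$ to all of $[0,\infty)$ by writing $t = n t_0 + r$ with $n \in \mathbb{Z}_+$ and $0 \leq r < t_0$ and setting $T_D(t) := T_D(t_0)^n T_D(r)$; the local law makes this consistent and carries strong continuity from $[0,t_0]$ to the whole half-line.

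It remains to identify the generator $C$ of $(T_D(t))$ with the operator $A_D$ of \eqref{eq:AD_def}, and this is where I expect the main difficulty to lie. I would argue through the Laplace transform: applying $\int_0^\infty e^{-\lambda t}(\cdot)\,\mathrm{d}t$ to \eqref{eq:VCF} and using $\int_0^\infty e^{-\lambda t} T_{-1}(t)\,\mathrm{d}t = R(\lambda,A_{-1})$ on $X_{-1}$ yields, for $\re \lambda$ large, the identity $R(\lambda,C) = R(\lambda,A) + R(\lambda,A_{-1}) D\, R(\lambda,C)$. The crux is to show that $I - R(\lambda,A_{-1}) D \in \mathcal{L}(X)$ is invertible for $\re\lambda$ large; testing $V_D$ on constant operator-valued functions bounds $\|(\int_0^t T_{-1}(r)\,\mathrm{d}r) D\|_{\mathcal{L}(X)}$ by $\|V_D\|_{\mathcal{L}(\mathcal{X}_{t_0})} < 1$ on $[0,t_0]$, and an Abelian-type argument transfers this to $\|R(\lambda,A_{-1}) D\|_{\mathcal{L}(X)} < 1$ as $\lambda \to \infty$. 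Then $R(\lambda,C) = (I - R(\lambda,A_{-1}) D)^{-1} R(\lambda,A)$, which is exactly the operator obtained by solving $(\lambda - A_{-1} - D)x = y$ for $x \in X$; hence $R(\lambda,C) = R(\lambda,A_D)$, and comparing ranges gives $\dom(C) = \dom(A_D)$ together with $C = A_D$. The variation of constants formula \eqref{eq:VCF} is part of the construction, so it needs no separate proof. The principal obstacle throughout is the unboundedness of $D$ into $X_{-1}$: both the return of $V_D$ into $\mathcal{L}(X)$ and the resolvent estimate must be extracted from the contraction hypothesis on $V_D$ rather than assumed, and it is the resolvent $R(\lambda,A_{-1}) \in \mathcal{L}(X_{-1},X)$ that absorbs this unboundedness.
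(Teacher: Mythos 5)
Your proposal is correct and follows essentially the same route as the proof the paper relies on (it cites Theorem~III.3.1 and Corollary~III.3.2 of Engel--Nagel rather than proving the statement itself): the fixed-point construction $T_D=(I-V_D)^{-1}T$ in $\mathcal{X}_{t_0}$, the semigroup law via uniqueness of the Volterra equation, the extension to $[0,\infty)$, and the generator identification through the resolvent identity $R(\lambda,A_D)=(I-R(\lambda,A_{-1})D)^{-1}R(\lambda,A)$ are exactly the steps of that reference, the last one being precisely what the paper restates as Lemma~\ref{lem:RD_bound}. The one point worth making explicit when you carry out the semigroup-law step is that the Volterra operator must be restricted to $\mathcal{X}_{t_0-s}$, where its norm is still bounded by $\|V_D\|_{\mathcal{L}(\mathcal{X}_{t_0})}<1$, so that uniqueness of the fixed point applies.
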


One can easily see that if $D_0 \in \mathcal{S}^{\rm DS}_{t_0}$, then
$D = cD_0$ with $0\leq c \leq 1 $ also satisfies 
$D\in \mathcal{S}^{\rm DS}_{t_0}$. Moreover,
the next theorem is helpful 
to verify the property $D \in \mathcal{S}^{\rm DS}_{t_0} $ in concrete examples; see Corollary III.3.4 of \cite{Engel2000}
for the proof.
\begin{theorem}
	\label{thm:verify}
Let $(T(t))_{t\geq0}$ be a strongly continuous semigroup 
on a Banach space $X$ and let $D \in \mathcal{L}(X,X_{-1})$.
If there exist $t_1 >0$ and $p \in [1,\infty)$ such that 
\[
\int^{t_1}_0 T_{-1}(t-s) Df(s)\mathrm{d}s \in X 
\]
for all functions $f \in L^p([0,t_1],X)$, then
$D \in \mathcal{S}^{\rm DS}_{t_0} $ for some $t_0 >0$.
\end{theorem}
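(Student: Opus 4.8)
The goal is to exhibit some $t_0>0$ for which the abstract Volterra operator $V_D$ is bounded on $\mathcal{X}_{t_0}$ with $\|V_D\|_{\mathcal{L}(\mathcal{X}_{t_0})}<1$, so that $D\in\mathcal{S}^{\rm DS}_{t_0}$ by definition. The difficulty is that the hypothesis is purely qualitative and concerns a single time: it only asserts that the integral evaluated at $t_1$ lands in $X$, whereas the definition of $\mathcal{S}^{\rm DS}_{t_0}$ demands a quantitative small-norm bound, uniform over all $t\in[0,t_0]$. The plan is therefore to upgrade the hypothesis to a bounded operator, transport that bound to every shorter time, and extract smallness from the short integration interval.

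First I would introduce, for each $t\in(0,t_1]$, the operator
\[
R_t:L^p([0,t],X)\to X_{-1},\qquad R_tf:=\int_0^t T_{-1}(t-s)Df(s)\,\mathrm{d}s,
\]
which is well defined and bounded into $X_{-1}$, since $\|T_{-1}(r)\|_{\mathcal{L}(X_{-1})}$ is bounded on $[0,t_1]$ and $D\in\mathcal{L}(X,X_{-1})$. By assumption $\ran R_{t_1}\subseteq X$. To promote this into boundedness of $R_{t_1}$ as a map into $X$, I would apply the closed graph theorem, using the continuous embedding $X\hookrightarrow X_{-1}$: if $f_n\to f$ in $L^p$ and $R_{t_1}f_n\to y$ in $X$, then $R_{t_1}f_n\to R_{t_1}f$ in $X_{-1}$ by continuity into $X_{-1}$, while $R_{t_1}f_n\to y$ in $X_{-1}$ by the embedding, forcing $y=R_{t_1}f$; hence the graph is closed and $R_{t_1}\in\mathcal{L}(L^p([0,t_1],X),X)$.

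Next, a translation-and-zero-extension argument transports this bound to every shorter time. Given $t\le t_1$ and $f\in L^p([0,t],X)$, I would extend $f$ by zero and shift it to the right end of $[0,t_1]$, i.e. set $\tilde f(s):=f(s-(t_1-t))$ for $s\ge t_1-t$ and $\tilde f(s):=0$ otherwise; a change of variables gives $R_tf=R_{t_1}\tilde f\in X$ with $\|\tilde f\|_{L^p([0,t_1])}=\|f\|_{L^p([0,t])}$, whence $\ran R_t\subseteq X$ and $\|R_t\|\le\|R_{t_1}\|$ uniformly in $t$. Now for $Q\in\mathcal{X}_{t_0}$ with $t_0\le t_1$ and $x\in X$, the map $s\mapsto Q(s)x$ is continuous, hence lies in $L^p$, and $(V_DQ)(t)x=R_t\big(Q(\cdot)x\big)\in X$. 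Using $\|Q(\cdot)x\|_{L^p([0,t])}\le t^{1/p}\|Q\|_\infty\|x\|$ I would estimate
\[
\|(V_DQ)(t)x\|\le\|R_t\|\,t^{1/p}\|Q\|_\infty\|x\|\le\|R_{t_1}\|\,t_0^{1/p}\|Q\|_\infty\|x\|,
\]
so that $\|V_D\|_{\mathcal{L}(\mathcal{X}_{t_0})}\le\|R_{t_1}\|\,t_0^{1/p}$, which is made strictly less than $1$ by choosing $t_0\le t_1$ small enough.

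The main obstacle is the remaining structural requirement that $V_D$ actually map $\mathcal{X}_{t_0}$ into itself, namely that $t\mapsto(V_DQ)(t)x$ be continuous with values in $X$ and not merely in $X_{-1}$; this is precisely where the $X_{-1}$-valuedness of $D$ is felt. A naive splitting of the forward increment gives $(V_DQ)(t+h)x=T(h)(V_DQ)(t)x+(\text{small})$, which converges by strong continuity of $(T(t))_{t\ge0}$, but the backward increment produces $(T(h)-I)$ applied to the family $(V_DQ)(t-h)x$, which is only bounded, and the argument stalls because $T(h)-I\to0$ merely strongly. To circumvent this I would first substitute $\sigma=t-s$ to freeze the semigroup argument, writing $(V_DQ)(t)x=\int_0^t T_{-1}(\sigma)DQ(t-\sigma)x\,\mathrm{d}\sigma$, and then compare two times by splitting the difference into an endpoint contribution, controlled by a constant times $|t'-t|^{1/p}$ via the uniform bound on $R_t$, and a contribution measuring the shift $Q(t'-\sigma)x-Q(t-\sigma)x$, controlled by the uniform continuity of $s\mapsto Q(s)x$ on the compact interval; both tend to $0$ as $t'\to t$. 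This establishes $V_DQ\in\mathcal{X}_{t_0}$ and completes the verification that $D\in\mathcal{S}^{\rm DS}_{t_0}$.
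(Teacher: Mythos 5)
Your proof is correct and follows essentially the argument behind the result the paper invokes here (it gives no proof of its own, citing Corollary~III.3.4 of Engel--Nagel instead): the closed graph theorem via the embedding $X\hookrightarrow X_{-1}$ upgrades the hypothesis to boundedness of $R_{t_1}$, the zero-extension/translation step transfers that bound to all $t\le t_1$, and the factor $t_0^{1/p}$ delivers $\|V_D\|_{\mathcal{L}(\mathcal{X}_{t_0})}<1$. Your strong-continuity check of $t\mapsto (V_DQ)(t)x$ (endpoint piece of order $|t'-t|^{1/p}$ plus a shift piece handled by uniform continuity of $s\mapsto Q(s)x$) is also the standard one and closes the only delicate gap.
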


The following result holds for
the resolvent of
the sum $A_D$; see
(iv) of the proof of Theorem~III.3.1 in \cite{Engel2000}.
\begin{lemma}
	\label{lem:RD_bound}
Let $A$ be the generator of a strongly continuous semigroup $(T(t))_{t\geq0}$
on a Banach space $X$. If $D \in \mathcal{S}^{\rm DS}_{t_0} $
for some $t_0 >0$, then
there exists $\lambda^*> 0$ such that for all
$\lambda \geq \lambda^*$, one has $\lambda \in \varrho(A) \cap \varrho(A_D)$,
\begin{equation*}
\|(\lambda I - A_{-1})^{-1}D\|_{\mathcal{L}(X)} < 1,
\end{equation*}
and
\begin{equation*}
(\lambda I - A_{D})^{-1} = (I - (\lambda I - A_{-1})^{-1}D)^{-1} (\lambda I - A)^{-1}.
\end{equation*}
\end{lemma}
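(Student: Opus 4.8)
The plan is to establish the middle assertion—the operator-norm bound on $(\lambda I - A_{-1})^{-1}D$—first, and then read off both $\lambda \in \varrho(A_D)$ and the resolvent identity from it. That $\lambda \in \varrho(A) = \varrho(A_{-1})$ for all sufficiently large $\lambda$ is immediate from the Hille--Yosida theory together with the equality of resolvent sets recorded in Section~\ref{sec:preliminaries}, so the real work is the norm bound and a factorization.

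For the norm bound I would start from the Laplace representation of the extrapolated resolvent, $(\lambda I - A_{-1})^{-1}y = \int_0^\infty e^{-\lambda t}T_{-1}(t)y\,\mathrm{d}t$ for $y \in X_{-1}$ and $\lambda$ above the growth bound $\omega_0$ of $(T(t))_{t\geq0}$. Putting $y = Dx$ and cutting $[0,\infty)$ into the windows $[nt_0,(n+1)t_0]$, the semigroup law $T_{-1}(nt_0+\rho) = T_{-1}(nt_0)T_{-1}(\rho)$ factors each window. The decisive observation is that the inner integral over a single window is
\[
\int_0^{t_0} e^{-\lambda\rho}T_{-1}(\rho)Dx\,\mathrm{d}\rho = (V_D Q_\lambda)(t_0)x, \qquad Q_\lambda(s) := e^{-\lambda(t_0-s)}I,
\]
where $Q_\lambda \in \mathcal{X}_{t_0}$; by the definition of $\mathcal{S}^{\rm DS}_{t_0}$ this lies in $X$, so the prefactors $T_{-1}(nt_0)$ collapse to $T(nt_0)$ and I obtain the convergent series
\[
(\lambda I - A_{-1})^{-1}Dx = \sum_{n=0}^\infty e^{-\lambda nt_0}\,T(nt_0)\,(V_D Q_\lambda)(t_0)x .
\]
Since $\|Q_\lambda\|_\infty = 1$ (attained at $s=t_0$), the middle factor obeys $\|(V_DQ_\lambda)(t_0)\|_{\mathcal{L}(X)} \le \|V_D\|_{\mathcal{L}(\mathcal{X}_{t_0})} < 1$ uniformly in $\lambda$, while the operator sum equals $I$ plus a tail whose norm is at most $M\,e^{-(\lambda-\omega_0)t_0}/(1-e^{-(\lambda-\omega_0)t_0})$ (using $\|T(nt_0)\| \le Me^{\omega_0 nt_0}$) and hence decays to $0$ as $\lambda \to \infty$. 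Thus $\|(\lambda I - A_{-1})^{-1}D\|_{\mathcal{L}(X)}$ is bounded by $\|V_D\|$ times a factor tending to $1$, and so falls below $1$ once $\lambda \ge \lambda^*$ for a suitable threshold.

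I expect this tail control to be the \emph{main obstacle}: a single-window estimate only recovers the bound $\|V_D\|<1$, and the real point is that the contributions of all later windows—summed geometrically and damped by the resolvent's exponential weight—do not push the norm back up to $1$ in the limit. The period-splitting, which reduces every window to the fixed quantity $(V_D Q_\lambda)(t_0)$, is what makes the threshold $\|V_D\|<1$ survive.

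Finally, with $\lambda \ge \lambda^*$ fixed I would set $R := (\lambda I - A_{-1})^{-1}D \in \mathcal{L}(X)$, so that $\|R\|<1$ and $I-R$ is boundedly invertible via its Neumann series. On $X = \dom(A_{-1})$ one has the factorization $(\lambda I - A_{-1} - D)x = (\lambda I - A_{-1})(I-R)x$ in $X_{-1}$, since $Rx \in X$. To prove $\lambda \in \varrho(A_D)$ and the stated formula simultaneously, I would verify that $x := (I-R)^{-1}(\lambda I - A)^{-1}y$ satisfies $(I-R)x = (\lambda I - A)^{-1}y$ and therefore, for every $y \in X$,
\[
(\lambda I - A_{-1} - D)x = (\lambda I - A_{-1})(\lambda I - A)^{-1}y = y \in X,
\]
where the last equality uses that $(\lambda I - A)^{-1}y \in \dom(A)$ and that $A_{-1}$ restricts to $A$ there. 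This at once shows $x \in \dom(A_D)$, that $(\lambda I - A_D)x = y$, and hence surjectivity; injectivity is immediate because $\lambda I - A_{-1}$ is injective on $X$ and $I-R$ is invertible. Reading this off yields $\lambda \in \varrho(A)\cap\varrho(A_D)$ together with $(\lambda I - A_D)^{-1} = (I-(\lambda I - A_{-1})^{-1}D)^{-1}(\lambda I - A)^{-1}$, as claimed.
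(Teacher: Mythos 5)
Your proof is correct: the identification of the inner integral over each window $[nt_0,(n+1)t_0]$ with $(V_DQ_\lambda)(t_0)$ for $Q_\lambda(s)=e^{-\lambda(t_0-s)}I$ is exactly what lets the hypothesis $\|V_D\|_{\mathcal{L}(\mathcal{X}_{t_0})}<1$ enter, and the geometric tail estimate plus the Neumann-series factorization $(\lambda I-A_{-1}-D)=(\lambda I-A_{-1})(I-R)$ correctly yield all three assertions. The paper itself gives no proof and only cites step (iv) of the proof of Theorem~III.3.1 in \cite{Engel2000}; your argument is essentially a faithful reconstruction of that cited argument, so there is nothing to flag beyond the routine remark that the $X$-convergent series and the $X_{-1}$-convergent Laplace integral agree because the embedding $X\hookrightarrow X_{-1}$ is continuous.
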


Let $t_0 >0$ and take $D \in \mathcal{S}^{\rm DS}_{t_0}$.
By Lemma~\ref{lem:RD_bound}, the norms on $X$
\[
\|x\|_{-1}^A =
\|(\lambda I - A)^{-1}x\|,\quad \|x\|_{-1}^{A_D} = \|(\lambda I - A_D)^{-1}x\|
\]
are equivalent for sufficiently large $\lambda>0$.
This implies that 
the extrapolation space $X_{-1}^{A_D}$ associated with $A_D$ 
is isomorphic to $X_{-1}$. In what follows, we identify 
$X_{-1}^{A_D}$ with $X_{-1}$. For $t \geq 0$,
let $(T_D)_{-1}(t)$ be the continuous extension of the operator $T_D(t)$
to the extrapolation space $X_{-1}$.
The generator $(A_D)_{-1}$ 
of the semigroup $((T_D)_{-1}(t))_{t \geq 0}$
is the unique continuous extension of $A_D:\dom(A_D) \to X$
to $\mathcal{L}(X,X_{-1})$.
Since $A_{-1}+D$ also belongs to $\mathcal{L}(X,X_{-1})$, it follows 
from the definition \eqref{eq:AD_def} 
of $A_D$ that 
\begin{equation}
\label{eq:AD_identity}
(A_D)_{-1}x = (A_{-1}+D)x\qquad \forall x \in X.
\end{equation}

\subsection{Solution of abstract evolution equation \eqref{eq:plant}}
Let $B \in \mathcal{L}(U,X_{-1})$, and
define the operator $S(t)$ on $U$ by
\begin{equation}
	\label{eq:S_def}
S(t)u := \int^t_0 T_{-1}(s)Bu \mathrm{d}s,\quad u \in U,~t \geq 0.
\end{equation}
For the analysis of the abstract evolution equation \eqref{eq:plant},
we recall the results
developed in Lemma~2.2 and its proof of \cite{Logemann2003},
where $X$ and $U$ are Hilbert spaces.
The idea of the proof is to use the following 
standard fact on strongly continuous semigroups:
\[
A_{-1} S(t) u = (T_{-1}(t) - I) Bu
\]
for $u \in U$ and $t \geq 0$.
Notice that $u $ does not depend on the time $t$.
Take $\lambda \in \varrho(A) = \varrho(A_{-1})$ arbitrarily. Then
\begin{align}
\label{eq:St_rep}
S(t) u = \big(I - T(t) \big)(\lambda I - A_{-1})^{-1}Bu  + 
\lambda \int^t_0 T(s) (\lambda I - A_{-1})^{-1}Bu\mathrm{d}s
\end{align}
for all $u \in U$ and all $t \geq 0$.
In \eqref{eq:St_rep}, $(\lambda I - A_{-1})^{-1}B \in \mathcal{L}(U,X)$, that is,
the resolvent $(\lambda I - A_{-1})^{-1}$ smoothens and absorbs the 
unboundedness of $B$.
The above observation 
is also applicable to the case where $X$ and $U$ are Banach spaces,
and one can obtain the next lemma.

\begin{lemma}
	\label{lem:S_bounded}
	Let $X$ and $U$ be Banach spaces and let $(T(t))_{t\geq 0}$
	be a strongly continuous semigroup on $X$.
	For all $B \in \mathcal{L}(U,X_{-1})$ and all $t \geq 0$,
	the operator $S(t)$ defined by \eqref{eq:S_def} belongs to
	$\mathcal{L}(U,X)$.
	Moreover, $\lim_{t \downarrow 0} \|S(t)u\| = 0$
	for every $u \in U$.
\end{lemma}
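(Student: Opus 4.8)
The plan is to establish the key representation formula \eqref{eq:St_rep} rigorously, since once we have it the conclusions follow with little effort. The core idea is that the resolvent $(\lambda I - A_{-1})^{-1}$, which belongs to $\mathcal{L}(X_{-1},X)$, smoothens the unbounded operator $B$: the composition $(\lambda I - A_{-1})^{-1}B$ lands in $\mathcal{L}(U,X)$. The integral in \eqref{eq:S_def} converges a priori only in $X_{-1}$, but once rewritten via \eqref{eq:St_rep} every term is manifestly an $X$-valued expression formed from genuinely bounded operators, and this is what gives $S(t) \in \mathcal{L}(U,X)$.

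First I would fix $\lambda \in \varrho(A) = \varrho(A_{-1})$ and record the identity $A_{-1}S(t)u = (T_{-1}(t)-I)Bu$, which expresses that $S(t)u$ is obtained by integrating the semigroup against the constant-in-time vector $Bu \in X_{-1}$. Applying $(\lambda I - A_{-1})^{-1}$ to $A_{-1}S(t)u = \lambda S(t)u - (\lambda I - A_{-1})S(t)u$ and rearranging yields
\begin{equation*}
S(t)u = (\lambda I - A_{-1})^{-1}(T_{-1}(t)-I)Bu + \lambda (\lambda I - A_{-1})^{-1}S(t)u.
\end{equation*}
The delicate point is interchanging $(\lambda I - A_{-1})^{-1}$ with the integral defining $S(t)$ in the last term: because the resolvent is a bounded operator from $X_{-1}$ into $X$ and commutes with $T_{-1}(s)$, one gets $(\lambda I - A_{-1})^{-1}S(t)u = \int_0^t T(s)(\lambda I - A_{-1})^{-1}Bu\,\mathrm{d}s$, an $X$-valued Bochner integral of a continuous integrand. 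Substituting $(\lambda I - A_{-1})^{-1}(T_{-1}(t)-I)Bu = (T(t)-I)(\lambda I - A_{-1})^{-1}Bu$, using that $T(t)$ restricts $T_{-1}(t)$ on $X$, produces exactly \eqref{eq:St_rep}.

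With \eqref{eq:St_rep} in hand, boundedness of $S(t)$ is immediate: set $R := (\lambda I - A_{-1})^{-1}B \in \mathcal{L}(U,X)$, so that $S(t)u = (I - T(t))Ru + \lambda\int_0^t T(s)Ru\,\mathrm{d}s$. The first term is bounded by $(1 + M_t)\|R\|\,\|u\|$ where $M_t := \sup_{0\leq s\leq t}\|T(s)\|$, and the integral term is bounded by $|\lambda| t M_t\|R\|\,\|u\|$; both are finite and linear in $u$, giving $S(t) \in \mathcal{L}(U,X)$. For the final claim, letting $t \downarrow 0$ one has $T(t)Ru \to Ru$ by strong continuity, so $(I-T(t))Ru \to 0$, while the integral term is dominated by $|\lambda| t M_t\|R\|\,\|u\| \to 0$; hence $\|S(t)u\| \to 0$ for each fixed $u \in U$.

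I expect the main obstacle to be the rigorous justification of pulling the bounded operator $(\lambda I - A_{-1})^{-1}$ inside the $X_{-1}$-valued integral and verifying that the resulting integrand $s \mapsto T(s)Ru$ is genuinely $X$-valued and continuous, so that the integral is a well-defined Bochner integral in $X$ rather than merely in $X_{-1}$. This rests on the commutation $(\lambda I - A_{-1})^{-1}T_{-1}(s) = T(s)(\lambda I - A_{-1})^{-1}$ on $X_{-1}$ together with the fact that $(\lambda I - A_{-1})^{-1}$ maps $X_{-1}$ continuously into $X$; once these are invoked, the remaining estimates are routine. The only point requiring care in the Banach (rather than Hilbert) setting is that all arguments above use only boundedness and strong continuity, never any inner-product structure, so the passage from \cite{Logemann2003} carries over verbatim.
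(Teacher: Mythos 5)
Your proposal is correct and follows essentially the same route as the paper: the paper derives the representation \eqref{eq:St_rep} from the identity $A_{-1}S(t)u=(T_{-1}(t)-I)Bu$ and observes that $(\lambda I-A_{-1})^{-1}B\in\mathcal{L}(U,X)$ absorbs the unboundedness of $B$, exactly as you do. Your additional care about commuting the resolvent with the $X_{-1}$-valued integral is a legitimate filling-in of details the paper leaves implicit.
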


Let $D \in \mathcal{S}^{\rm DS}_{t_0} $
for some $t_0 >0$.
To solve the abstract evolution equation \eqref{eq:plant},
we define a function $x$ recursively by
\begin{equation}
\label{eq:unique_solution}
\left\{
\begin{aligned}
		x(0) &= x^0, \\
x(k\tau+t) &= T_D(t) x(k\tau) + S_{\!D}(t) Fx(k\tau),\quad 
t \in (0, \tau],~k \in \mathbb{Z}_+,
\end{aligned}
\right.
\end{equation}
where
\begin{equation}
\label{eq:SD_def}
S_{\!D}(t)u: = 
\int^t_0 (T_D)_{-1}(s)Bu \mathrm{d}s,\quad u \in U,~t\geq 0.
\end{equation}
By Lemma~\ref{lem:S_bounded}, $S_{\!D}(\tau) \in \mathcal{L}(U,X)$.
This implies $x (k\tau) \in X$, and therefore
$Fx(k\tau)$ is well defined for all $k \in \mathbb{Z}_+$.
Since
\[
S_{\!D}(t_2) -  S_{\!D}(t_1) = 
T_D(t_1)  S_{\!D}(t_2-t_1)
\]
for every $t_2> t_1 \geq 0$,
it follows from Lemma~\ref{lem:S_bounded} that 
for all $u \in U$,
the function
\begin{align*}
\xi_{D,u} 
&: \mathbb{R}_+ \to X\\
&: t \mapsto S_{\!D}(t)u
\end{align*}
is continuous on $\mathbb{R}_+$
with respect to $X$. 
Hence the function $x$ defined by \eqref{eq:unique_solution}
satisfies $x \in C(\mathbb{R}_+,X)$. Moreover, applying standard 
results in the semigroup theory
to the extended semigroup $((T_D)_{-1}(t))_{t\geq 0}$, we see that 
the function $x$ given in \eqref{eq:unique_solution} satisfies
\begin{equation*}
x|_{[k\tau,(k+1)\tau]} \in C^1([k\tau,(k+1)\tau],X_{-1})
\end{equation*}
and the differential equation interpreted in $X_{-1}$
\begin{equation*}
\dot x(t) = (A_{-1}+D)x(t) +BFx(k\tau)
\end{equation*}
for all $t \in (k\tau,(k+1)\tau)$ and $k \in \mathbb{Z}_+$
with initial condition $x(0) = x^0$. Clearly,
a function with these properties  is unique.
Therefore, we say that 
the function $x$ defined by \eqref{eq:unique_solution} is 
the solution of the abstract evolution equation \eqref{eq:plant}.

We conclude this preliminary section with the definition of
 the exponential stability of
the sampled-data system \eqref{eq:plant}.
\begin{definition}[Exponential stability]
	\em{
	The sampled-data system \eqref{eq:plant} with 
	$D \in \mathcal{S}^{\rm DS}_{t_0} $
	for some $t_0 >0$ is {\em exponentially stable with  decay rate 
	greater than $\omega \geq 0$} if 
	there exist constants 
	$M \geq 1$ and $\widetilde \omega >\omega$ such that 
	the solution $x$ given by \eqref{eq:unique_solution} satisfies
	\begin{equation*}
	\|x(t)\| \leq M e^{-\widetilde \omega t} \|x^0\| \qquad 
	\forall x^0 \in X,~\forall t \geq 0. 
	\end{equation*}
	}
\end{definition}

\section{Robustness analysis of exponential stability}
\label{sec:main_result}
The following theorem is the main result of this paper.
\begin{theorem}
	\label{thm:main_result}
	Let $X$ and $U$ be Banach spaces and
	let $A$ be the generator of a strongly continuous semigroup $(T(t))_{t\geq0}$
	on $X$. Suppose that $B \in \mathcal{L}(U,X_{-1})$, 
	$F \in \mathcal{L}(X,U)$, and $D_0 \in \mathcal{S}^{\rm DS}_{t_0} $
	for some $t_0 >0$.
	If the nominal sampled-data system \eqref{eq:plant}  with $D=0$
	is exponentially stable with decay rate greater than $\omega \geq 0$, then 
	there exists $c^*>0$ such that for every $c \in [0,c^*]$,
	the perturbed sampled-data system \eqref{eq:plant}  with $D = cD_0$
	is also exponentially stable with decay rate greater than $\omega$.
\end{theorem}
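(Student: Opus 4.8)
The plan is to reduce the continuous-time exponential stability of \eqref{eq:plant} to a spectral-radius condition on the operator that advances the state over one sampling period, and then to show that this operator depends continuously (in operator norm) on the perturbation, so that the spectral-radius condition is inherited for small $c$. Throughout I restrict to $c\in[0,1]$, for which $cD_0\in\mathcal{S}^{\rm DS}_{t_0}$ and hence $T_{cD_0}$ and $S_{\!cD_0}$ are well defined. I would introduce the one-step operator
\begin{equation*}
\Delta_D := T_D(\tau) + S_{\!D}(\tau)F \in \mathcal{L}(X),
\end{equation*}
so that the solution \eqref{eq:unique_solution} satisfies $x(k\tau)=\Delta_D^k x^0$. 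Writing $\Phi_D(t):=T_D(t)+S_{\!D}(t)F$, which is uniformly bounded on $[0,\tau]$ by the strong continuity of $(T_D(t))_{t\ge0}$ and Lemma~\ref{lem:S_bounded}, the intersample values obey $\|x(k\tau+t)\|\le\big(\sup_{s\in[0,\tau]}\|\Phi_D(s)\|\big)\|\Delta_D^k\|\,\|x^0\|$. Combined with the Gelfand formula $\lim_k\|\Delta_D^k\|^{1/k}=r(\Delta_D)$, this shows that \eqref{eq:plant} is exponentially stable with decay rate greater than $\omega$ if and only if $r(\Delta_D)<e^{-\omega\tau}$. In particular the nominal hypothesis yields $r(\Delta_0)<e^{-\omega\tau}$, and it suffices to prove $r(\Delta_{cD_0})<e^{-\omega\tau}$ for all sufficiently small $c$.

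The core of the argument is to establish $\|\Delta_{cD_0}-\Delta_0\|_{\mathcal{L}(X)}\to0$ as $c\downarrow0$, which I would split into $T_{cD_0}(\tau)\to T(\tau)$ in $\mathcal{L}(X)$ and $S_{\!cD_0}(\tau)\to S(\tau)$ in $\mathcal{L}(U,X)$. For the semigroup I would use that the variation of constants formula \eqref{eq:VCF} reads, in $\mathcal{X}_{t_0}$, as $T_D=T+V_D T_D$, whence $T_D-T=(I-V_D)^{-1}V_D T$ on $[0,t_0]$. Since $V_{cD_0}=cV_{D_0}$ and $\|V_{D_0}\|_{\mathcal{L}(\mathcal{X}_{t_0})}<1$, a Neumann-series estimate gives $\|T_{cD_0}-T\|_{\infty}=O(c)$ on $[0,t_0]$, and the semigroup law propagates this to the whole interval $[0,\tau]$.

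The convergence of $S_{\!cD_0}(\tau)$ is the main obstacle, because $S_{\!D}(\tau)$ couples the two unbounded operators $B$ and $D$ through the extended semigroup $(T_D)_{-1}$. Here I would exploit the resolvent representation \eqref{eq:St_rep} applied to $A_D$,
\begin{equation*}
S_{\!D}(\tau)u = \big(I-T_D(\tau)\big)R_D Bu + \lambda\int_0^\tau T_D(s)R_D Bu\,\mathrm{d}s,\qquad R_D := (\lambda I-(A_D)_{-1})^{-1},
\end{equation*}
in which the unboundedness of $B$ is absorbed into the bounded operator $R_D B\in\mathcal{L}(U,X)$. By Lemma~\ref{lem:RD_bound} together with \eqref{eq:AD_identity}, for $\lambda$ large one has $R_{cD_0}B=(I-c(\lambda I-A_{-1})^{-1}D_0)^{-1}(\lambda I-A_{-1})^{-1}B$, and since $(\lambda I-A_{-1})^{-1}B\in\mathcal{L}(U,X)$ is fixed while the Neumann factor tends to $I$ in $\mathcal{L}(X)$, this converges to $(\lambda I-A_{-1})^{-1}B$ in $\mathcal{L}(U,X)$ as $c\downarrow0$. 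Feeding this, together with $T_{cD_0}\to T$ uniformly on $[0,\tau]$, into the representation above (and dominating the integral uniformly in $s$) yields $S_{\!cD_0}(\tau)\to S(\tau)$, and hence $\Delta_{cD_0}\to\Delta_0$ in $\mathcal{L}(X)$.

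Finally I would invoke the upper semicontinuity of the spectrum under norm convergence. Choosing $\rho$ with $r(\Delta_0)<\rho<e^{-\omega\tau}$, there is $\delta>0$ such that $\|\Delta-\Delta_0\|_{\mathcal{L}(X)}<\delta$ forces $\sigma(\Delta)\subseteq\{|z|<\rho\}$, i.e.\ $r(\Delta)<\rho$ by compactness of the spectrum. By the norm convergence just established, some $c^*\in(0,1]$ makes $\|\Delta_{cD_0}-\Delta_0\|_{\mathcal{L}(X)}<\delta$ for all $c\in[0,c^*]$, whence $r(\Delta_{cD_0})<\rho<e^{-\omega\tau}$. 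By the equivalence of the first paragraph, the perturbed system is then exponentially stable with decay rate greater than $\omega$, in fact with the common rate $-(\ln\rho)/\tau$. As indicated, the decisive and most delicate step is the norm convergence of $S_{\!cD_0}(\tau)$, where the resolvent representation is what makes the simultaneous unboundedness of $B$ and $D$ tractable.
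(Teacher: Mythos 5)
Your proposal is correct and follows essentially the same route as the paper: reduce to the one-step operator $\Delta_D(\tau)=T_D(\tau)+S_{\!D}(\tau)F$, prove $T_{cD_0}\to T$ uniformly on $[0,\tau]$ via $T_D-T=(I-V_D)^{-1}V_DT$ and the semigroup law, prove $S_{\!cD_0}(\tau)\to S(\tau)$ via the resolvent representation and the factorization $(\lambda I-(A_D)_{-1})^{-1}=(I-(\lambda I-A_{-1})^{-1}D)^{-1}(\lambda I-A_{-1})^{-1}$, and conclude by robustness of the discrete stability condition. The only cosmetic differences are that you argue the discrete--continuous equivalence and the perturbation step directly through Gelfand's formula and upper semicontinuity of the spectrum, where the paper cites Rebarber--Townley (Lemma~\ref{lem:ex_po_stability}) and Wirth--Hinrichsen (Lemma~\ref{lem:power_stable_perturb}) for the equivalent power-stability statements.
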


In the remainder of this section, we give the proof of Theorem~\ref{thm:main_result}.
Let $D \in \mathcal{S}^{\rm DS}_{t_0} $
for some $t_0 >0$.
For $t\geq 0$, define the operators $\Delta(t), \Delta_D(t) \in \mathcal{L}(X)$ by
\[
\Delta(t) := T(t)+S(t)F,\quad \Delta_D(t) := T_D(t)+S_{\!D}(t)F.
\]
Then the solution $x$ defined by \eqref{eq:unique_solution} satisfies
\[
x\big((k+1)\tau\big) = \Delta_D(\tau) x(k\tau)\qquad \forall k \in \mathbb{Z}_+.
\]
We call $\Delta_D(\tau)$ the {\em closed-loop operator}
of the discretized system.

Recall that 
an operator $\Delta \in \mathcal{L}(X)$ is said to be {\em power stable}
if there exist constants $M \geq 1$ and $\theta \in (0,1)$ such that 
$\|\Delta^k\|_{\mathcal{L}(X)}  \leq M \theta^k$ for
all $k \in \mathbb{Z}_+$.
Lemma~\ref{lem:ex_po_stability} below connects
the exponential stability of the sampled-data system 
to the power stability of the closed-loop operator 
$\Delta_D(\tau)$. This result 
can be obtained by
a slightly modification of the proof
of Proposition 2.1 in \cite{Rebarber1998}.
\begin{lemma}
	\label{lem:ex_po_stability}
	Let $X$ and $U$ be Banach spaces and 
	let $A$ be the generator of a strongly continuous semigroup $(T(t))_{t\geq0}$
	on $X$. Suppose that $B \in \mathcal{L}(U,X_{-1})$, 
	$F \in \mathcal{L}(X,U)$, and $D \in \mathcal{S}^{\rm DS}_{t_0} $
	for some $t_0>0$.
	For any $\tau>0$, the 
	sampled-data system \eqref{eq:plant} is exponentially stable with decay rate greater than $\omega \geq 0$ 
	if and only if $e^{\omega \tau}\Delta_D(\tau)$ is power stable.
\end{lemma}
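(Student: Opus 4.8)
The plan is to derive both implications from the fact that the solution $x$ defined by \eqref{eq:unique_solution} is governed by the iterates of $\Delta_D(\tau)$. First I would record that $x(k\tau) = \Delta_D(\tau)^k x^0$ for all $k \in \mathbb{Z}_+$, and that, decomposing an arbitrary $t \geq 0$ as $t = k\tau + s$ with $k \in \mathbb{Z}_+$ and $s \in [0,\tau)$, the second line of \eqref{eq:unique_solution} gives
\[
x(t) = \bigl(T_D(s) + S_{\!D}(s)F\bigr) x(k\tau) = \Delta_D(s)\,\Delta_D(\tau)^k x^0 .
\]
Writing $\Phi := e^{\omega\tau}\Delta_D(\tau)$, power stability of $\Phi$ is equivalent to the existence of constants $M \geq 1$ and $\theta \in (0,1)$ with $\|\Delta_D(\tau)^k\|_{\mathcal{L}(X)} \leq M \theta^k e^{-k\omega\tau}$ for all $k \in \mathbb{Z}_+$.

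For the necessity direction (exponential stability $\Rightarrow$ power stability), I would evaluate the estimate $\|x(t)\| \leq \widetilde M e^{-\widetilde\omega t}\|x^0\|$ at the sampling instants $t = k\tau$. Taking the supremum over $\|x^0\| \leq 1$ yields $\|\Delta_D(\tau)^k\|_{\mathcal{L}(X)} \leq \widetilde M e^{-\widetilde\omega k\tau}$, and multiplying by $e^{k\omega\tau}$ gives $\|\Phi^k\|_{\mathcal{L}(X)} \leq \widetilde M \theta^k$ with $\theta := e^{-(\widetilde\omega - \omega)\tau} \in (0,1)$, since $\widetilde\omega > \omega$. Hence $\Phi$ is power stable.

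For the sufficiency direction (power stability $\Rightarrow$ exponential stability), I would begin from the intersample identity and bound $\|x(t)\| \leq \|\Delta_D(s)\|_{\mathcal{L}(X)}\,\|\Delta_D(\tau)^k\|_{\mathcal{L}(X)}\,\|x^0\|$. The key preliminary step is the uniform bound $C := \sup_{s \in [0,\tau]} \|\Delta_D(s)\|_{\mathcal{L}(X)} < \infty$. Since $(T_D(t))_{t \geq 0}$ is strongly continuous, $\sup_{s\in[0,\tau]}\|T_D(s)\|_{\mathcal{L}(X)} < \infty$; for the term $S_{\!D}(s)F$, Lemma~\ref{lem:S_bounded} together with the continuity of $s \mapsto S_{\!D}(s)u$ established earlier shows that $\{S_{\!D}(s)u : s \in [0,\tau]\}$ is bounded for each fixed $u \in U$, so the uniform boundedness principle gives $\sup_{s\in[0,\tau]}\|S_{\!D}(s)\|_{\mathcal{L}(U,X)} < \infty$. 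With $C$ in hand, I would write $\theta^k = e^{-\alpha k}$ with $\alpha := -\ln\theta > 0$, so $\|\Delta_D(\tau)^k\|_{\mathcal{L}(X)} \leq M e^{-(\alpha + \omega\tau)k}$; since $k\tau = t - s > t - \tau$, setting $\widetilde\omega := \omega + \alpha/\tau > \omega$ gives $e^{-(\alpha+\omega\tau)k} = e^{-\widetilde\omega(t-s)} \leq e^{\widetilde\omega\tau} e^{-\widetilde\omega t}$, whence $\|x(t)\| \leq C M e^{\widetilde\omega\tau} e^{-\widetilde\omega t}\|x^0\|$. Enlarging the constant to be at least $1$ yields exponential stability with decay rate greater than $\omega$.

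The main obstacle is precisely the uniform boundedness of $s \mapsto \Delta_D(s)$ on $[0,\tau]$. Because $B$, and hence $S_{\!D}(\cdot)$, is unbounded, one does not have an a priori operator-norm estimate but only strong (pointwise) continuity supplied by Lemma~\ref{lem:S_bounded}; the passage from the discrete decay rate of $\Delta_D(\tau)$ to a continuous-time decay rate therefore genuinely rests on invoking the uniform boundedness principle rather than on any direct computation.
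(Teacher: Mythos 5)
Your proof is correct and follows essentially the same route the paper intends, namely the argument of Proposition~2.1 in \cite{Rebarber1998}: evaluate the solution at sampling instants to identify $x(k\tau)=\Delta_D(\tau)^k x^0$, and control the intersample behavior by a uniform bound on $\|\Delta_D(s)\|_{\mathcal{L}(X)}$ over $[0,\tau]$. Your appeal to the uniform boundedness principle for $\sup_{s\in[0,\tau]}\|S_{\!D}(s)\|_{\mathcal{L}(U,X)}$ is valid; the same bound is also available constructively from the representation \eqref{eq:SD_rep} (cf.\ the first assertion of Lemma~\ref{lem:SD_conv}).
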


From the result on the stability radius of a discrete-time system developed 
in Corollary~4.5 of \cite{Wirth1994},
we obtain the following simple result.
\begin{lemma}
	\label{lem:power_stable_perturb}
	Let $X$ be a Banach space and let $\kappa >0$. Suppose that 
	$\Delta_1 \in \mathcal{L}(X)$. If 
	$\kappa \Delta_1 $ is power stable, then
	there exists $\varepsilon >0$ such that $\kappa \Delta_2$ is also power stable for
	every $\Delta_2 \in \mathcal{L}(X)$ satisfying 
	$\|\Delta_2 - \Delta_1\|_{\mathcal{L}(X)}< \varepsilon$.
\end{lemma}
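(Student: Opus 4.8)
The plan is to reduce the claim to a standard robustness statement for a single power-stable operator and then invoke the stability-radius result of \cite{Wirth1994}. The essential observation is that power stability is governed by the spectral radius: $\kappa \Delta_1$ is power stable precisely when its spectral radius satisfies $r(\kappa\Delta_1) < 1$, and the stability-radius theory guarantees that such an operator admits a nonzero margin of additive perturbations within which power stability persists.

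First I would record the scaling identity $\kappa \Delta_2 = \kappa \Delta_1 + \kappa(\Delta_2 - \Delta_1)$, which rewrites $\kappa\Delta_2$ as an additive perturbation of the power-stable operator $\kappa \Delta_1$ by the operator $P := \kappa(\Delta_2 - \Delta_1) \in \mathcal{L}(X)$. This transfers the problem from the scaled family $\kappa\Delta_2$ to an unstructured perturbation of the \emph{fixed} operator $\kappa\Delta_1$, with perturbation norm controlled by $\|P\|_{\mathcal{L}(X)} = \kappa \|\Delta_2-\Delta_1\|_{\mathcal{L}(X)}$. Next I would apply Corollary~4.5 of \cite{Wirth1994} to $\kappa\Delta_1$: that result yields a strictly positive stability radius, i.e. a constant $\varepsilon'>0$ such that $\kappa\Delta_1 + P$ remains power stable for every $P \in \mathcal{L}(X)$ with $\|P\|_{\mathcal{L}(X)} < \varepsilon'$. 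Setting $\varepsilon := \varepsilon'/\kappa$, any $\Delta_2$ with $\|\Delta_2 - \Delta_1\|_{\mathcal{L}(X)} < \varepsilon$ produces a perturbation of norm $\|P\|_{\mathcal{L}(X)} < \varepsilon'$, so $\kappa\Delta_2 = \kappa\Delta_1 + P$ is power stable, as required.

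The only substantive point is the positivity of the stability radius supplied by \cite{Wirth1994}; equivalently, it is the upper semicontinuity of the spectral radius $\Delta \mapsto r(\Delta)$ with respect to the operator norm. One could also argue this directly and self-containedly: since $r(\kappa\Delta_1)<1$, the spectral radius formula $r(\Gamma)=\lim_{k\to\infty}\|\Gamma^k\|^{1/k}_{\mathcal{L}(X)}$ furnishes an integer $m$ with $\|(\kappa\Delta_1)^m\|_{\mathcal{L}(X)}<1$, and continuity of the polynomial map $\Delta \mapsto (\kappa\Delta)^m = \kappa^m\Delta^m$ then keeps $\|(\kappa\Delta_2)^m\|_{\mathcal{L}(X)}<1$ for $\Delta_2$ close enough to $\Delta_1$. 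Writing $n = qm+r$ with $0\le r<m$, the block estimate $\|(\kappa\Delta_2)^n\|_{\mathcal{L}(X)} \le \|(\kappa\Delta_2)^r\|_{\mathcal{L}(X)}\,\|(\kappa\Delta_2)^m\|^{\lfloor n/m\rfloor}_{\mathcal{L}(X)}$ then delivers the geometric decay defining power stability of $\kappa\Delta_2$. Either route makes the margin $\varepsilon$ explicit and closes the argument.
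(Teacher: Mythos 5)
Your proof is correct, and its primary route---rewriting $\kappa\Delta_2=\kappa\Delta_1+\kappa(\Delta_2-\Delta_1)$ as an additive perturbation of the fixed power-stable operator $\kappa\Delta_1$ and invoking the positive stability radius from Corollary~4.5 of \cite{Wirth1994}---is exactly the argument the paper intends, which gives no further justification beyond that citation. Your self-contained alternative (choose $m$ with $\|(\kappa\Delta_1)^m\|_{\mathcal{L}(X)}<1$ via the spectral radius formula, use norm-continuity of $\Delta\mapsto\Delta^m$, and conclude by the block estimate) is also sound and has the minor advantage of making the margin $\varepsilon$ explicit without relying on the external reference.
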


By Lemmas~\ref{lem:ex_po_stability} and \ref{lem:power_stable_perturb},
it is enough to show that $\|\Delta_D(\tau) - \Delta(\tau)\|_{\mathcal{L}(X)}$ is sufficiently small.
In what follows, we investigate the difference 
of the closed-loop operators, $\Delta_D(\tau) - \Delta(\tau)$, by 
splitting it into two parts:
\[
\Delta_D(\tau) - \Delta(\tau) = \big(T_D(\tau) - T(\tau) \big) + 
\big(S_{\!D}(\tau) - S(\tau)\big) F.
\]

First we study the difference of the semigroups, $T_D(\tau) - T(\tau)$.
\begin{lemma}
	\label{lem:TD_conv}
	Let 
	$A$ be the generator of a strongly continuous semigroup $(T(t))_{t\geq0}$
	on a Banach space $X$.
	Let $0 \leq c \leq 1$ and 
	$D_0 \in \mathcal{S}^{\rm DS}_{t_0} $
	for some $t_0 >0$, and define
	$D:=cD_0$. Then 
	the strongly continuous semigroup $(T_D(t))_{t\geq0}$
	generated by the operator $A_D$ given in Theorem~\ref{thm:generation}
	satisfies the following two properties
	for all $\tau >0$:
	\begin{enumerate}
	\item 
	$\sup \hspace{2pt}
	\{ \|T_D(t)\|_{\mathcal{L}(X)}: 0 \leq c \leq 1,~0\leq t \leq \tau   \} < \infty$;
	\item
	$\displaystyle
	\lim_{c \downarrow 0} \sup_{0\leq t \leq \tau} \|T_D(t) - T(t)\|_{\mathcal{L}(X)} = 0$.
	\end{enumerate}
\end{lemma}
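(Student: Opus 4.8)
The plan is to exploit the variation of constants formula \eqref{eq:VCF} of Theorem~\ref{thm:generation} directly in the Banach space $\mathcal{X}_{t_0}$, and then to bootstrap from the reference interval $[0,t_0]$ to an arbitrary $[0,\tau]$ via the semigroup property. Since $(T_D(t))_{t\geq0}$ is strongly continuous and locally bounded, its restriction $Q_D := T_D(\cdot)|_{[0,t_0]}$ belongs to $\mathcal{X}_{t_0}$, as does $Q_0 := T(\cdot)|_{[0,t_0]}$. Writing $D = cD_0$, so that $V_D = cV_{D_0}$, the formula \eqref{eq:VCF} reads, as an identity in $\mathcal{X}_{t_0}$, $Q_D = Q_0 + cV_{D_0}Q_D$, i.e. $(I - cV_{D_0})Q_D = Q_0$. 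Setting $q := \|V_{D_0}\|_{\mathcal{L}(\mathcal{X}_{t_0})} < 1$, we have $\|cV_{D_0}\|_{\mathcal{L}(\mathcal{X}_{t_0})} \leq q < 1$ for every $c \in [0,1]$, so $I - cV_{D_0}$ is invertible through a Neumann series with $\|(I - cV_{D_0})^{-1}\|_{\mathcal{L}(\mathcal{X}_{t_0})} \leq 1/(1-q)$, uniformly in $c$.

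First I would derive the two estimates on $[0,t_0]$. From $Q_D = (I - cV_{D_0})^{-1}Q_0$ I immediately get the uniform bound $\|Q_D\|_{\infty} \leq \frac{1}{1-q}\|Q_0\|_{\infty}$ for all $c \in [0,1]$, which controls $\|T_D(t)\|_{\mathcal{L}(X)}$ on $[0,t_0]$. For the convergence I would use the identity $(I - cV_{D_0})^{-1} - I = (I - cV_{D_0})^{-1} cV_{D_0}$ to obtain $\|Q_D - Q_0\|_{\infty} \leq \frac{cq}{1-q}\|Q_0\|_{\infty}$, which tends to $0$ as $c \downarrow 0$. This settles both claims on the single interval $[0,t_0]$.

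Next I would pass to $[0,\tau]$ for arbitrary $\tau > 0$ using the semigroup (cocycle) property $T_D(t_1 + t_2) = T_D(t_1)T_D(t_2)$. Fixing $N \in \mathbb{Z}_+$ with $Nt_0 \geq \tau$ and writing any $t \in [0,\tau]$ as $t = kt_0 + r$ with $0 \leq k \leq N$ and $r \in [0,t_0)$, the factorization $T_D(t) = T_D(t_0)^k T_D(r)$, together with the bound $\widetilde M := \|Q_0\|_{\infty}/(1-q)\geq 1$ from the previous step, gives $\|T_D(t)\|_{\mathcal{L}(X)} \leq \widetilde M^{\,N+1}$ uniformly in $c \in [0,1]$ and $t \in [0,\tau]$, which is claim~(i). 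For claim~(ii), I would argue by induction on the number of blocks of length $t_0$, using the telescoping decomposition
\[
T_D(kt_0 + r) - T(kt_0 + r) = T_D(kt_0)\big(T_D(r) - T(r)\big) + \big(T_D(kt_0) - T(kt_0)\big)T(r).
\]
The first term is bounded by the uniform bound on $\|T_D(kt_0)\|_{\mathcal{L}(X)}$ times the $[0,t_0]$ convergence of $T_D(r) - T(r)$, and the second by the inductive convergence at $kt_0$ times $\sup_{r \in [0,t_0]}\|T(r)\|_{\mathcal{L}(X)}$; both tend to $0$ as $c \downarrow 0$, uniformly in $r$.

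The main obstacle is precisely this passage from $[0,t_0]$ to $[0,\tau]$: the Desch--Schappacher hypothesis only furnishes the contraction estimate $\|V_{D_0}\|_{\mathcal{L}(\mathcal{X}_{t_0})} < 1$ on the single interval $[0,t_0]$, and on $[0,\tau]$ with $\tau > t_0$ the corresponding Volterra operator need not be a contraction, so the Neumann-series argument cannot be repeated there. The resolution is that the semigroup decomposition reduces everything to finitely many copies of the $[0,t_0]$ estimates, and since the number of blocks $N$ is fixed once $\tau$ is fixed, the multiplicative constant $\widetilde M^{\,N+1}$ stays finite and independent of $c$, so no blow-up occurs as $c \downarrow 0$.
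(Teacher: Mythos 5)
Your proposal is correct and follows essentially the same route as the paper: both solve the variation-of-constants identity $Q_D=(I-cV_{D_0})^{-1}Q_0$ in $\mathcal{X}_{t_0}$ via a Neumann series to get uniform bounds and $O(c)$ convergence on $[0,t_0]$, and then extend to $[0,\tau]$ by the same telescoping decomposition and induction over the finitely many blocks of length $t_0$. The only cosmetic difference is the order of factors in the telescoping identity, which does not affect the estimates.
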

\begin{proof}
	Take $\tau >0$.
	By the strong continuity of $(T(t))_{t\geq 0}$,
	there exists $M \geq 1$ such that $\|T(t)\| \leq M$ for all 
	$t \in [0,\tau]$.
	Let $q := \|V_{D_0}\|_{\mathcal{L}(\mathcal{X}_{t_0})}  < 1$. Then $\|V_D\|_{\mathcal{L}(\mathcal{X}_{t_0})}  = cq$ for 
	$D = cD_0$ with $0\leq c \leq 1$.
	Since $T_D(t) = [(I-V_D)^{-1}T](t)$ for $t \in [0,t_0]$ by the 
	variation of constants formula \eqref{eq:VCF}, 
	we obtain
	\[
	T_D - T = \big((I - V_D)^{-1}-I
	\big)T = (I - V_D)^{-1}V_DT\text{~~on~~$[0,t_0]$}.
	\]
	Hence
	\[
	\|T_D(t) - T(t)\|_{\mathcal{L}(X)} 
	\leq \frac{Mcq }{1-cq} =:q_0\qquad \forall t \in [0,t_0].
	\]
	
	Let $n \in \mathbb{N}$ satisfy $nt_0 \leq \tau < (n+1)t_0$.
	Suppose that
	for $m \in \mathbb{N}$ with $m \leq n$,
	$q_{m-1} >0$ satisfies
	\[
	\big\|T_D\big((m-1)t_0+t\big) - T\big((m-1)t_0+t\big)\big\|_{\mathcal{L}(X)}  \leq
	q_{m-1} \qquad \forall t \in [0,t_0].
	\]
	Since 
	\[
	\|T_D(t)\|_{\mathcal{L}(X)}  \leq q_0 +M \qquad \forall t \in [0,t_0],
	\]
	we obtain
	\begin{align*}
	&\|T_D(mt_0+t) - T(mt_0+t) \|_{\mathcal{L}(X)}  \\
	&\quad \leq
	\|
	T_D(t) T_D(mt_0) - T_D(t)T(mt_0)
	\|_{\mathcal{L}(X)}  +
	\|
	T_D(t) T(mt_0) - T(mt_0+t)
	\|_{\mathcal{L}(X)}   \\
	&\quad \leq 
	\|T_D(t)\|_{\mathcal{L}(X)}  ~\! \|T_D(mt_0) - T(mt_0) \|_{\mathcal{L}(X)}  + 
	\|T_D(t) - T(t)\|_{\mathcal{L}(X)}  ~\!\|T(mt_0)\|_{\mathcal{L}(X)}  \\
	&\quad \leq
	(q_0+M)q_{m-1}+Mq_0\qquad \forall t \in [0,t_0].
	\end{align*}
	Therefore, if we set 
	$q_m := (q_0+M)q_{m-1}+Mq_0$ for $m \in \mathbb{N}$,
	then $q_{m-1} \leq q_m$ and
	\[
	\|T_D(t) - T(t) \|_{\mathcal{L}(X)}  \leq q_n \qquad \forall t \in [0,\tau ].
	\]
	Since
	\[
	\max_{0\leq c \leq 1}q_0 \leq \frac{Mq}{1-q},\quad 
	\lim_{c \downarrow 0}q_0 = 0,
	\]
	it follows that 
	$\sup \hspace{2pt} \{ \|T_D(t)\|_{\mathcal{L}(X)} : 0 \leq c \leq 1,~0\leq t \leq \tau   \} < \infty$
	and
	\[
	\sup_{0\leq t \leq \tau}\|T_D(t) - T(t)\|_{\mathcal{L}(X)}  \to 0
	\]
	as $c \downarrow 0$.
\end{proof}

Using Lemma~\ref{lem:TD_conv}, we next estimate $\|S_{\!D}(\tau) - 
S(\tau)\|_{\mathcal{L}(U,X)}$.
\begin{lemma}
	\label{lem:SD_conv}
	Let $X$ and $U$ be Banach spaces and 
	let $A$ be the generator of a strongly continuous semigroup $(T(t))_{t\geq0}$
	on $X$. Suppose that  $B \in \mathcal{L}(U,X_{-1})$ and
	$F \in \mathcal{L}(X,U)$.
	Let $0 \leq c \leq 1$ and 
	$D_0 \in \mathcal{S}^{\rm DS}_{t_0} $
	for some $t_0 >0$, and define
	$D:=cD_0$. Then 
	the operator $S_{\!D}(t) \in \mathcal{L}(U,X)$ defined by \eqref{eq:SD_def}
	satisfies
	the following two properties for all $\tau  >0$:
	\begin{enumerate}
	\item
	$\sup \hspace{2pt}\{ \|S_{\!D}(t)\|_{\mathcal{L}(U,X)} : 0 \leq c \leq 1,~0\leq t \leq \tau   \} < \infty$;
	\item
	$\displaystyle
	\lim_{c \downarrow 0} \sup_{0\leq t \leq \tau} \|S_{\!D}(t) - S(t)\|_{\mathcal{L}(U,X)} = 0$.
	\end{enumerate}
\end{lemma}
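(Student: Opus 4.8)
The plan is to obtain for $S_{\!D}(t)$ the exact analogue of the representation formula \eqref{eq:St_rep}, thereby shifting the unboundedness of $B$ onto a resolvent that absorbs it, and then to read off both assertions from the already-established behaviour of $(T_D(t))_{t\geq0}$ in Lemma~\ref{lem:TD_conv}. First I would fix, once and for all, a real $\lambda$ large enough that Lemma~\ref{lem:RD_bound} applies to $D_0$, so that $\|(\lambda I - A_{-1})^{-1}D_0\|_{\mathcal{L}(X)} < 1$. Since $D = cD_0$ with $0 \leq c \leq 1$, this gives $\|(\lambda I - A_{-1})^{-1}D\|_{\mathcal{L}(X)} = c\|(\lambda I - A_{-1})^{-1}D_0\|_{\mathcal{L}(X)} < 1$ for every such $c$, and the same $\lambda$ then lies in $\varrho(A)\cap\varrho(A_D)$ simultaneously for all $c \in [0,1]$. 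This uniform choice of $\lambda$ is what will make the subsequent estimates uniform in $c$.

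Next I would repeat, with $A_D$ in place of $A$, the derivation leading to \eqref{eq:St_rep}: starting from the standard identity $(A_D)_{-1}S_{\!D}(t)u = \big((T_D)_{-1}(t) - I\big)Bu$ in $X_{-1}$ and writing $v_D := (\lambda I - (A_D)_{-1})^{-1}Bu \in X$, one obtains
\[
S_{\!D}(t)u = \big(I - T_D(t)\big)v_D + \lambda \int_0^t T_D(s)v_D\,\mathrm{d}s, \qquad v_D = (\lambda I - (A_D)_{-1})^{-1}Bu.
\]
The decisive point is to compare $v_D$ with its unperturbed counterpart $v := (\lambda I - A_{-1})^{-1}Bu$. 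Using \eqref{eq:AD_identity}, which gives $(A_D)_{-1} = A_{-1} + D$ on $X$, one factors $\lambda I - (A_D)_{-1} = (\lambda I - A_{-1})\big(I - (\lambda I - A_{-1})^{-1}D\big)$ and hence
\[
(\lambda I - (A_D)_{-1})^{-1}B = W_c R, \quad W_c := \big(I - c(\lambda I - A_{-1})^{-1}D_0\big)^{-1} \in \mathcal{L}(X),\ \ R := (\lambda I - A_{-1})^{-1}B \in \mathcal{L}(U,X).
\]
Here $R$ is bounded from $U$ to $X$, as already noted after \eqref{eq:St_rep}, and by the Neumann series the operators $W_c$ are bounded uniformly in $c \in [0,1]$ and satisfy $\|W_c - I\|_{\mathcal{L}(X)} \to 0$ as $c \downarrow 0$; thus $v_D = W_c Ru$ with $W_c R \to R$ in $\mathcal{L}(U,X)$.

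Finally I would substitute $v_D = W_c Ru$ into the representation formula and estimate. Assertion~(1) is immediate, since each factor $\|I - T_D(t)\|_{\mathcal{L}(X)}$, $\|W_c\|_{\mathcal{L}(X)}$, $\|R\|_{\mathcal{L}(U,X)}$ and $\lambda\tau\sup_{0\leq s\leq\tau}\|T_D(s)\|_{\mathcal{L}(X)}$ is bounded uniformly in $c \in [0,1]$ and $t \in [0,\tau]$ by Lemma~\ref{lem:TD_conv}(1). For assertion~(2) I would express $S(t)u$ via \eqref{eq:St_rep} as $(I - T(t))Ru + \lambda\int_0^t T(s)Ru\,\mathrm{d}s$ and split the difference so that each term carries either the factor $W_c - I$ or the factor $T_D(\cdot) - T(\cdot)$:
\[
S_{\!D}(t) - S(t) = \big(I - T_D(t)\big)(W_c - I)R + \big(T(t) - T_D(t)\big)R + \lambda \int_0^t \Big[T_D(s)(W_c - I)R + \big(T_D(s) - T(s)\big)R\Big]\mathrm{d}s.
\]
Taking $\mathcal{L}(U,X)$-norms and the supremum over $t \in [0,\tau]$, every term is bounded by a uniformly-bounded quantity times either $\|W_c - I\|_{\mathcal{L}(X)}$ or $\sup_{0\leq t\leq\tau}\|T_D(t) - T(t)\|_{\mathcal{L}(X)}$, both of which tend to $0$ as $c \downarrow 0$ by the Neumann series and by Lemma~\ref{lem:TD_conv}(2), respectively; this yields assertion~(2).

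The main obstacle is the first step: legitimizing the representation formula for $S_{\!D}$ when $B$ and $D$ are both unbounded. One must verify that the semigroup identity $(A_D)_{-1}S_{\!D}(t)u = ((T_D)_{-1}(t) - I)Bu$ holds in $X_{-1}$ and that $(\lambda I - (A_D)_{-1})^{-1}B$ genuinely maps $U$ into $X$, so that all the manipulations remain inside $X$ rather than $X_{-1}$. This is precisely where the identification of $X_{-1}^{A_D}$ with $X_{-1}$ and the resolvent formula of Lemma~\ref{lem:RD_bound} do the work; once the formula is in hand, the remaining estimates are routine.
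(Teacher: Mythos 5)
Your proposal is correct and follows essentially the same route as the paper's proof: fix $\lambda$ via Lemma~\ref{lem:RD_bound}, factor $(\lambda I-(A_D)_{-1})^{-1}B=(I-(\lambda I-A_{-1})^{-1}D)^{-1}(\lambda I-A_{-1})^{-1}B$, derive the representation \eqref{eq:SD_rep}, and reduce both assertions to Lemma~\ref{lem:TD_conv} together with the Neumann-series estimate $\|W_c-I\|\leq c\alpha/(1-c\alpha)\to 0$. The only difference is the cosmetic choice of which factor carries the increment in the add-and-subtract step of assertion~(2), which is immaterial.
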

\begin{proof}
	1.~
	Lemma~\ref{lem:RD_bound} shows that 
	there exists  $\lambda \in \varrho(A) =  \varrho(A_{-1})$ such that
	\begin{equation*}
	\alpha :=\|(\lambda I - A_{-1})^{-1}D_0\|_{\mathcal{L}(X)}  < 1.
	\end{equation*}
	For $D := cD_0$ with $0\leq c \leq 1$, we obtain
	\begin{equation}
	\label{eq:resol_D}
	\|(\lambda I - A_{-1})^{-1}D\|_{\mathcal{L}(X)}  = c\alpha \leq \alpha.
	\end{equation}
		From \eqref{eq:AD_identity},
	we have that for all $x \in X$,
	\begin{align*}
		(\lambda I - (A_D)_{-1})x  &= 
		(\lambda I - A_{-1} - D)x \\
		&= 
		(\lambda I - A_{-1})(I- (\lambda I - A_{-1})^{-1}D)x.
	\end{align*}
	By \eqref{eq:resol_D}, the operator $I- (\lambda I - A_{-1})^{-1}D$
	is invertible in $\mathcal{L}(X)$, and we obtain
	\[
	\|(I- (\lambda I - A_{-1})^{-1}D)^{-1}\|_{\mathcal{L}(X)}  \leq \frac{1}{1-\alpha}.
	\]
	Hence, $\lambda I - (A_D)_{-1}$ is invertible in $\mathcal{L}(X,X_{-1})$,
	\begin{equation}
	\label{eq:resol_AD_resol_A}
	(\lambda I - (A_D)_{-1})^{-1} = 
	(I- (\lambda I - A_{-1})^{-1}D)^{-1} (\lambda I - A_{-1})^{-1},
	\end{equation}
	and 
	\begin{align}
		\|(\lambda I - (A_D)_{-1})^{-1} B\|_{\mathcal{L}(U,X)}  
		&\leq 
		\frac{\|(\lambda I - A_{-1})^{-1}B\|_{\mathcal{L}(U,X)} }{1-\alpha} .
		\label{eq:resol_B_bound}
	\end{align}
	
	Since $(A_D)_{-1}$ is the generator of $((T_D)_{-1}(t))_{t\geq0}$,
	we obtain
	\begin{equation}
	\label{eq:AD_-1_cond}
	(A_D)_{-1} S_{\!D}(t) u = \big((T_D)_{-1}(t) - I
	\big) Bu\qquad \forall u \in U,~\forall t \geq 0.
	\end{equation}
	For all $u \in U $ and all $t \geq 0$,
	\eqref{eq:AD_-1_cond} yields
	\[
	(\lambda I - (A_D)_{-1}) S_{\!D}(t) u =
	\big(I - (T_D)_{-1}(t) \big) Bu + 
	\lambda \int^t_0 (T_D)_{-1}(s)Bu \mathrm{d}s
	\]
	and therefore
	\begin{align}
	S_{\!D}(t) u &= \big(I - T_D(t)\big)(\lambda I - (A_D)_{-1})^{-1}Bu + 
	\lambda \int^t_0 T_D(s) (\lambda I - (A_D)_{-1})^{-1}Bu\mathrm{d}s.
	\label{eq:SD_rep}
	\end{align}
	Combining this with \eqref{eq:resol_B_bound} and Lemma~\ref{lem:TD_conv}
	yields the first assertion.

	2.~
	Substituting $D = 0$ into \eqref{eq:SD_rep}, we have that 
	for all $u \in U$ and all $t \geq 0$,
	\begin{align*}
	S(t) u &= \big(I - T(t) \big)(\lambda I - A_{-1})^{-1}Bu  + 
	\lambda \int^t_0 T(s) (\lambda I - A_{-1})^{-1}Bu\mathrm{d}s.
	\end{align*}
	Hence
	\begin{align}
	S_{\!D}(t) u - S(t)u &=
	\big(I - T_D(t)\big)(\lambda I - (A_D)_{-1})^{-1}Bu - \big(I - T(t)\big)(\lambda I - A_{-1})^{-1}Bu 
	\notag \\
	&+ \lambda \int^t_0 
	\left(
	T_D(s) (\lambda I - (A_D)_{-1})^{-1}Bu -
	T(s) (\lambda I - A_{-1})^{-1}Bu	\right)\mathrm{d}s
	\label{eq:SD_error}
	\end{align}
	for all $u \in U$ and all $t \geq 0$. 	Moreover,
	\begin{align}
		&\| T_D(t)(\lambda I - (A_D)_{-1})^{-1}B - T(t)(\lambda I - A_{-1} )^{-1}B \|_{\mathcal{L}(U,X)}   \notag \\
		&\quad \leq
		\| T_D(t) - T(t) \|_{\mathcal{L}(X)}  ~\! \| (\lambda I - (A_D)_{-1})^{-1}B \|_{\mathcal{L}(U,X)}  
		\notag  \\
		&\qquad +
		\|T(t)\|_{\mathcal{L}(X)}  ~\!  
		\| (\lambda I - (A_D)_{-1})^{-1}B - (\lambda I - A_{-1})^{-1}B \|_{\mathcal{L}(U,X)} \label{eq:TD_triangle_ineq}
	\end{align}
	for all $t \geq 0 \notag$.
	By \eqref{eq:resol_AD_resol_A},
	\begin{align*}
	&(\lambda I - (A_D)_{-1})^{-1} - 	(\lambda I - A_{-1} )^{-1} \\
	 &\qquad =
	(\lambda I - A_{-1})^{-1}D ( I - (\lambda I - A_{-1})^{-1}D)^{-1} 	(\lambda I - A_{-1} )^{-1}.
	\end{align*}
	Since \eqref{eq:resol_D} yields
	\begin{equation*}
	\|(\lambda I - A_{-1})^{-1}D ( I - (\lambda I - A_{-1})D)^{-1} \|_{\mathcal{L}(X)} \leq
	\frac{c\alpha }{1-c\alpha },
	\end{equation*}
	it follows that 
	\begin{equation}
	\label{eq:S_bound1}
	\| (\lambda I - (A_D)_{-1})^{-1}B - (\lambda I - A_{-1} )^{-1}B \|_{\mathcal{L}(U,X)}
	\leq \frac{c\alpha }{1-c\alpha } \|(\lambda I - A_{-1} )^{-1}B\|_{\mathcal{L}(U,X)}.
	\end{equation}

	Take $\varepsilon >0$ and $\tau  > 0$.
	Combining Lemma~\ref{lem:TD_conv} with
	the estimates  \eqref{eq:resol_B_bound}, \eqref{eq:TD_triangle_ineq}, and \eqref{eq:S_bound1},
	we see 
	that there exists $c^* > 0$
	such that
	for all $t \in [0,\tau ]$ and all $D=cD_0$ with $0\leq c \leq c^*$,
	\begin{align}
	\label{eq:S_bound2}
	\| T_D(t)(\lambda I - (A_D)_{-1})^{-1}B - T(t)(\lambda I - A_{-1} )^{-1}B \|_{\mathcal{L}(U,X)} 
	<
	\varepsilon.
	\end{align}
	This implies that if $u \in U$ satisfies $\|u\|_U \leq 1$, then
	for all $t \in [0,\tau ]$ and all
	$D=cD_0$ with $0\leq c \leq c^*$, 
	\begin{align}
	\label{eq:S_bound3}
	&\left\|\int^t_0 \left(T_D(s) (\lambda I - (A_D)_{-1})^{-1}Bu -
	T(s) (\lambda I - A_{-1})^{-1}Bu \right) \mathrm{d}s \right\|< \tau  \varepsilon.
	\end{align}
	Applying  \eqref{eq:S_bound2} and \eqref{eq:S_bound3} to 
	\eqref{eq:SD_error},
	we have that for all $t \in [0,\tau ]$ and all
	$D=cD_0$ with $0\leq c \leq c^*$,
	\[
	\|S_{\!D}(t) - S(t)\|_{\mathcal{L}(U,X)} < (2+|\lambda|\tau)\varepsilon.
	\]
	Since 
	$\varepsilon$ was arbitrary, 
	the second assertion follows.
\end{proof}

We are now ready to prove the main theorem.
\begin{proof}[Proof of Theorem~\ref{thm:main_result}]
	Since the nominal sampled-data system \eqref{eq:plant}  with $D=0$
	is exponentially stable with decay rate greater than $\omega \geq 0$ by assumption,
	it follows from Lemma~\ref{lem:ex_po_stability} that 
	$e^{\omega \tau} \Delta(\tau)$ is power stable.
	By Lemma~\ref{lem:power_stable_perturb},
	there exists $\varepsilon >0$ such that 
	$\|\Delta_{D}(\tau) - \Delta(\tau) \|_{\mathcal{L}(X)} < \varepsilon$
	implies the power stability of $e^{\omega \tau} \Delta_D(\tau)$.
	From Lemmas~\ref{lem:TD_conv} and \ref{lem:SD_conv}, we find 
	$c^* >0$ satisfying 
	$\|\Delta_D(\tau) - \Delta(\tau)\|_{\mathcal{L}(X)} < \varepsilon$ for all
	perturbations in the form
	$D = cD_0$ with $c \in [0,c^*]$.
	Finally, using Lemma~\ref{lem:ex_po_stability} again,
	we have that for all $c \in [0,c^*]$, 
	the perturbed sampled-data system \eqref{eq:plant}  with 
	$D=cD_0$
	is exponentially stable with decay rate greater than $\omega$.
\end{proof}

 \section{Examples}
 \label{sec:example}
 \subsection{Heat equation with boundary perturbation}
 \label{sec:heat_equation}
Consider a metal rod of length one. Let $z(\xi,t)$
be the temperature of the rod at position $\xi \in [0,1]$ and
at time $t \geq 0$.
We control the temperature of the rod by
means of a heat flux acting at one boundary $\xi =1$, and
the input is generated by a sampled-data  state-feedback controller with
sampling period $\tau>0$. Moreover,
a certain perturbation effect appears at the other boundary $\xi=0$.
The system is described by the following partial differential equation:
\begin{equation}
\label{eq:Heat_equation}
\left\{
\begin{aligned}
\frac{\partial z}{\partial t} (\xi,t) &= 
\frac{\partial^2 z}{\partial \xi^2} (\xi,t), \quad& &0\leq \xi \leq 1,~t \geq 0\\
 z(\xi,0) &= z^0(\xi),\quad &&0\leq \xi \leq 1  \\
\frac{\partial z}{\partial \xi} (1,t)  &= Fz(\cdot, t), \quad &&k\tau \leq t < (k+1)\tau,~k \in \mathbb{Z}_+\\
\frac{\partial z}{\partial \xi} (0,t)  &= Hz(\cdot, t),\quad&& t \geq 0,
\end{aligned}
\right.
\end{equation}
where $F$ and $H$ are functionals on the space of all functions 
mapping $[0,1]$ into $\mathbb{R}$.

First, we transform the heat equation \eqref{eq:Heat_equation} into
the abstract evolution equation \eqref{eq:plant}.
Let the state space $X$ and the input space $U$ be
$X = L^2(0,1)$ and $U = \mathbb{C}$. 
We assume that $F,H \in \mathcal{L}(X,\mathbb{C})$.
The generator $A$ is given by
\begin{equation*}
	Af = f''
\end{equation*}
with domain 
\begin{align*}
\dom(A) = \{f\in L^{2}(0,1): \text{$f$, $f'$ are absolutely continuous,}\\
\text{$f'' \in L^{2}(0,1)$, and~} f'(0) = f'(1) = 0 \}.
\end{align*}
Let $(T(t))_{t\geq 0}$ be the strongly continuous semigroup on $X$
generated by $A$.
The control operator $B$ and the perturbation $D$ are given by
\begin{align*}
\begin{aligned}
Bu &= \delta_1 u,\quad u\in \mathbb{C} \\
Df &= -\delta_0 Hf,\quad f \in X,
\end{aligned}
\end{align*}
where $\delta_\xi$ is the delta function supported at the point $\xi \in [0,1]$;
see \cite{Ho1983} and \cite[Chap.~10]{Tucsnak2009}
for the derivation of these operators. Since $\delta_1$ and $\delta_0$
belong to the extrapolation space 
$X_{-1}$, it follows that $B\in \mathcal{L}(\mathbb{C},X_{-1})$ and 
$D \in \mathcal{L}(X,X_{-1})$.

By the technique explained in Remark~10.1.4 of \cite{Tucsnak2009},
the perturbation $D$ is written as
\[
Df = (Hf)(I-A_{-1})\eta\qquad \forall f \in X,
\]
where 
\[
\eta(\xi) := -\frac{(e^{\xi} + e^2e^{-\xi})}{e^2-1},\quad
0 \leq \xi \leq 1.
\]
Using this representation of the perturbation $D$,
we see that the perturbed operator $A_D$ given in 
Theorem~\ref{thm:generation} satisfies
\begin{align*}
\dom(A_D) = 
 \{f\in L^{2}(0,1): \text{$f$, $f'$ are absolutely continuous,}\\
\text{$f'' \in L^{2}(0,1)$,}~~ f'(0) = Hf, \text{~and~~} f'(1) = 0 \}
\end{align*}
and $A_D f = f''$ for all $f \in \dom(A_D)$.

We shall show that $D$ is a Desch-Shappacher perturbation.
Define $f_n \in X$ by
\begin{align*}
	f_0(\xi) := 1,\quad 
	f_n(\xi) := \sqrt{2} \cos(n\pi \xi)
\end{align*}
for $\xi \in [0,1]$ and $n \in \mathbb{N}$.
Then $(f_n)_{n \in \mathbb{Z}_+}$ is an orthonormal basis of $X$.
The generator $A$ may be written as
\begin{equation}
\label{eq:A_expansion_heat_eq}
Af = -\sum_{n=0}^\infty n^2\pi^2 \langle f, f_n \rangle_{L^2} f_n
\qquad \forall f \in \dom(A)
\end{equation}
and 
\[
\dom(A) = \left\{
f \in L^2(0,1) : 
\sum_{n=0}^\infty n^4\pi^4 ~ \big|\langle f, f_n \rangle_{L^2}\big|^2 < \infty
\right\},
\]
where $\langle \cdot, \cdot \rangle_{L^2}$
is an inner product in $L^2(0,1)$ defined by
\[
\langle f,g \rangle_{L^2} := 
\int_0^1  f(\xi) \overline{g(\xi)} \mathrm{d}\xi,\quad f,g \in L^2(0,1).
\]
We refer the reader to, e.g., Example~3.2.15 of \cite{Curtain2020}
for this representation of $A$.
A standard application of the Carleson measure criterion developed in \cite{Ho1983,Weiss1988} yields that $\delta_0$ 
is a finite-time 
$L^2$-admissible input element for the semigroup $(T(t))_{t\geq 0}$,
i.e., there exists $t_1 >0$ such that 
\[
\int^{t_1}_0 T_{-1}(t_1-s) \delta_0 w(s)\mathrm{d}s \in X\qquad \forall w \in L^2(0,t_1).
\]
Hence
\[
\int^{t_1}_0 T_{-1}(t_1-s) D \phi(s)\mathrm{d}s \in X\qquad \forall \phi \in L^2([0,t_1],X).
\]
By Theorem~\ref{thm:verify}, $D\in \mathcal{S}^{\rm DS}_{t_0} $ for some $t_0>0$.

Let the operator $H \in \mathcal{L}(X,\mathbb{C})$ be represented as $H = cH_0$
with $0\leq c \leq 1$ and $H_0 \in \mathcal{L}(X,\mathbb{C})$.
Theorem~\ref{thm:main_result} shows that
if the feedback operator $F \in \mathcal{L}(X,\mathbb{C})$ and the sampling 
period $\tau > 0$ are chosen
so that the nominal sampled-data system with $D=0$
is exponentially stable with decay rate greater 
than $\omega\geq 0$, then
the perturbed sampled-data system also has the same stability property for
all sufficiently small $c>0$.

 \subsection{Perturbed diagonal system}
Let  $
\ell^q := \{(x_n)_{n\in \mathbb{Z}_+}\subset \mathbb{C} :
\sum_{n=0}^{\infty}|x_n|^q< \infty  \} $ for $q \in [1,\infty)$ and 
$
\ell^{\infty} := \{(x_n)_{n\in \mathbb{Z}_+}\subset \mathbb{C} :
\sup_{n \in \mathbb{Z}_+}|x_n|< \infty  \}$. 
From the expansion \eqref{eq:A_expansion_heat_eq} of the generator $A$,
the heat equation \eqref{eq:Heat_equation} 
can be regarded
as a diagonal system.
More precisely, the heat equation \eqref{eq:Heat_equation} 
may be written as
the abstract evolution equation \eqref{eq:plant} with state space
$X = \ell^2 $ 
and input space $U = \mathbb{C}$.
The generator 
$A$ is a diagonal operator on $\ell^2$ given by
\[
(Ax)_{n \in \mathbb{Z}_+} =
(-n^2\pi^2 x_n)_{n \in \mathbb{Z}_+}
\]
with domain
\[
\dom(A) = \big\{
x = (x_n)_{n\in \mathbb{Z}_+}\in \ell^2: (n^2\pi^2 x_n)_{n \in \mathbb{Z}_+} \in \ell^2
\big\}.
\]
The control operator $B$ and the perturbation $D$ are written as
\begin{align*}
B u &= bu,\quad u \in \mathbb{C}\\
Df &= 
d Hf,\quad f \in \ell^2,
\end{align*}
where $b = (b_n)_{n\in \mathbb{Z}_+} \in \ell^{\infty} $ and 
$d =(d_n)_{n\in \mathbb{Z}_+}\in \ell^{\infty} $
 are given by
 \[
 b_0 = 1,~d_0 = -1,~b_n = (-1)^n\sqrt{2},~d_n = -\sqrt{2}\qquad 
\forall n \in \mathbb{N}
 \]
 and $H \in \mathcal{L}(\ell^2,\mathbb{C})$.
Here we consider a more general case $X = \ell^q$
with $q \in (1,\infty)$.
 
 Let $\kappa,\gamma >0$ and $\zeta \in \mathbb{R}$. Set
 $\lambda_n := -\kappa n^\gamma + \zeta$ for $n \in \mathbb{Z}_+$.
 Define a diagonal operator $A$ on $\ell^q$ by
 \[
 (Ax)_{n \in \mathbb{Z}_+} :=
 (\lambda_n  x_n)_{n \in \mathbb{Z}_+}
 \]
 with domain
 \[
\dom (A) := \big\{
 x = (x_n)_{n\in \mathbb{Z}_+}\in \ell^q: (\lambda_n x_n)_{n \in \mathbb{Z}_+} \in \ell^q
 \big\}.
 \]
Let $b,d \in \ell^{\infty} $ and $H \in \mathcal{L}(\ell^q,\mathbb{C})$.
Define
\begin{align*}
	B u &:= bu,\quad u \in \mathbb{C}\\
	Df &:= d Hf,\quad f \in \ell^q.
\end{align*}

Applying to the diagonal system
the Carleson measure criterion developed in 
Theorem~3.2 of \cite{Haak2010}, one can obtain the following fact:
Suppose that 
$q \in (1,\infty)$ and $\gamma >0$ satisfy
\begin{equation}
\label{eq:q_cond}
q \geq \frac{\gamma + 1}{\gamma}.
\end{equation}
Define 
\[
p :=\frac{\gamma q}{\gamma q - 1} \leq q.
\]
Then
$b$ and $d$ belong to the extrapolation space $X_{-1}$.
Moreover,
$d$ is a finite-time $L^{p}$-admissible 
input element for the semigroup
$(T(t))_{t \geq 0}$ generated by $A$,
i.e., there exists $t_1 >0$ such that 
\begin{equation}
\label{eq:d_admissible}
\int^{t_1}_0 T_{-1}(t_1-s) d w(s)\mathrm{d}s \in X\qquad \forall w \in 
L^{p}(0,t_1).
\end{equation}

By Theorem~\ref{thm:verify} and \eqref{eq:d_admissible}, we obtain
$D\in \mathcal{S}^{\rm DS}_{t_0} $ for some $t_0>0$.
Thus, Theorem~\ref{thm:main_result} 
can be applied to the perturbed 
diagonal system as in Section~\ref{sec:heat_equation}.
 
\section{Conclusion}
We have studied the exponential stability of
infinite-dimensional sampled-data systems with
unbounded control operators and Desch-Schappacher perturbations.
Since
the exponential stability of the sampled-data system
is equivalent to the power stability of the closed-loop
operator of the discretized system, we have investigated the discretized system
for the stability analysis of the sampled-data system.
To treat the unbounded control operator and perturbation,
we have utilized the property that 
their products with the resolvent of the generator map
boundedly into the state space.
Future work involves the  analysis
of the strong stability of
perturbed infinite-dimensional sampled-data systems.

\section*{Acknowledgments}
The author 
would like to thank Professor Hideki Sano for helpful comments on
perturbed heat equations.
The author is also grateful to the associate editor who made 
some positive suggestions for improving this manuscript, in particular,
the inclusion of the example of a diagonal system in a Banach framework.









\end{document}